\newtheorem{theorem}{Theorem}[section]
\newtheorem{corollary}[theorem]{Corollary}
\newtheorem{lemma}[theorem]{Lemma}
\newtheorem{proposition}[theorem]{Proposition}
\theoremstyle{definition}
\newtheorem{definition}[theorem]{Definition}
\newtheorem{conjecture}[theorem]{Conjecture}
\newtheorem{example}[theorem]{Example}
\newtheorem{problem}[theorem]{Problem}
\newtheorem{remark}[theorem]{Remark}
\numberwithin{equation}{section}
\newcommand{\card}{\operatorname{card}}
\newcommand{\dist}{\operatorname{dist}}
\newcommand{\green}{\operatorname{Gr}}
\newcommand{\Clos}{\operatorname{Clos}}
\newcommand{\RP}{\mathbb{RP}}
\newcommand{\R}{\mathbb{R}}
\newcommand{\mB}{\mathbb{B}}
\title{Improved Hebey-Vaugon conjecture on equivariant Yamabe invariants in dimension 3}
\author{Tongrui Wang}
\address{Institute for Theoretical Sciences, Westlake Institute for Advanced Study, Westlake University, Hangzhou, Zhejiang, 310024, China}
\email{wangtongrui@westlake.edu.cn}
\author{Xuan Yao}
\address{Cornell University, Department of Mathematics, Ithaca, New York 14850}
\email{xy346@cornell.edu}
\date{September 2023}
\begin{document}

\maketitle

\begin{abstract}
    Consider a closed connected $3$-manifold $M$ acted diffeomorphically on by a compact Lie group $G$ with at least one orbit of finite cardinality. 
    We show an upper bound for the $G$-equivariant Yamabe invariant $\sigma_G(M)$ under certain topological assumptions, which improved a conjecture of Hebey-Vaugon. 
\end{abstract}

\section{Introduction}


Yamabe invariant is a smooth topological invariant that is preserved under diffeomorphisms. The Yamabe invariant of an $n$-dimensional smooth manifold $M$ is defined as the supremum of Yamabe constants taken over all conformal classes on $M$. 

More precisely, we remind the reader of the definitions of Einstein energy functional, Yamabe constant, and Yamabe invariant as follows:

\begin{definition}\label{defn:Yamabede}
Suppose $(M,g)$ is an $n$-dimensional closed Riemannian manifold, we define its Einstein-Hilbert energy functional as
\[
E(g)=\frac{\int_{M}R_gdV_g}{(\int_MdV_g)^{\frac{n-2}{n}}},
\]
where $R_g$ is the scalar curvature of $(M,g)$.

The Yamabe constant of the conformal class $[g]$ is defined as
\begin{align*}
    Y(M,[g])=\inf_{u\in H^1(M,g)}\frac{\int_{M}\frac{4(n-1)}{n-2}|\nabla u|_g^2dV_g+\int_{M}R_gdV_g}{(\int_{M}|u|^{\frac{2n}{n-2}}dV_g)^{\frac{n-2}{n}}}.
\end{align*}
Taking supremum over all conformal classes on $M$, we define the Yamabe invariant of $M$ as
\begin{align*}
    \sigma(M)=\sup_{[g]\in\mathcal C(M)}Y(M,[g]),
\end{align*}
where $\mathcal C (M):=\{[g]: [g] \text{ is a comformal class on }M\}$.
\end{definition}

The motivation behind Definition \ref{defn:Yamabede} of Yamabe constants and Yamabe invariants comes from the well-known \textit{Yamabe Problem}.

\begin{theorem}[Yamabe Problem, Yamabe,  Trudinger, Aubin, Schoen]\label{prob:Yamabe}
    Given a compact Riemannian manifold $(M^n,g)$ with dimension greater than or equal to $3$, there is a metric conformal to $g$ with constant scalar curvature.
\end{theorem}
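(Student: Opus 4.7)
The plan is to recast the Yamabe problem as a variational problem: by the conformal change formula, if $\tilde g = u^{4/(n-2)}g$ with $u>0$, then
\[
R_{\tilde g} = u^{-\frac{n+2}{n-2}}\Bigl(-\tfrac{4(n-1)}{n-2}\Delta_g u + R_g u\Bigr),
\]
so $\tilde g$ has constant scalar curvature if and only if $u$ solves the semilinear elliptic PDE
\[
-\tfrac{4(n-1)}{n-2}\Delta_g u + R_g u \;=\; \lambda\, u^{\frac{n+2}{n-2}}
\]
for some constant $\lambda$. Such solutions are exactly the critical points of the Einstein–Hilbert functional restricted to the conformal class $[g]$, and the constrained infimum coincides with $Y(M,[g])$ from Definition \ref{defn:Yamabede}. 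Thus it suffices to produce a positive smooth minimizer of the Yamabe quotient.

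The first step is a subcritical approximation: for each exponent $p \in (2, 2n/(n-2))$, replace $|u|^{2n/(n-2)}$ by $|u|^p$ in the denominator. Because the Sobolev embedding $H^1 \hookrightarrow L^p$ is compact for such $p$, standard direct-method arguments give a smooth positive minimizer $u_p$, normalized so that $\|u_p\|_{L^p}=1$, solving the corresponding subcritical PDE with Lagrange multiplier $\lambda_p \to Y(M,[g])$. I would then attempt to pass to the limit $p \uparrow 2n/(n-2)$ and extract a strong $H^1$ limit. The central obstruction is loss of compactness at the critical exponent: concentration phenomena (``bubbling'') can cause $u_p$ to converge only weakly, with mass escaping into a rescaled standard sphere profile.

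To rule out bubbling, I would use Aubin's dichotomy: either a strong limit exists, or concentration occurs and forces $Y(M,[g]) \geq Y(S^n,[g_{\mathrm{round}}]) = n(n-1)\omega_n^{2/n}$. Hence the whole argument is reduced to establishing the strict inequality
\[
Y(M,[g]) \;<\; Y(S^n,[g_{\mathrm{round}}])
\]
whenever $(M,g)$ is not conformally equivalent to the round sphere (the sphere itself being handled by direct symmetrization of $u_p$). This strict inequality is the main obstacle and is the deepest part of the theorem. I would test the Yamabe quotient against a family of ``standard bubbles'' sharply concentrated at a chosen point $p \in M$, working in conformal normal coordinates so that the volume form is Euclidean to high order at $p$.

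The bubble computation splits into cases by dimension and conformal flatness. When $n \geq 6$ and $(M,g)$ is not locally conformally flat, Aubin's argument extracts a negative contribution of order $\varepsilon^4 |W(p)|_g^2$ from the Weyl tensor, giving the strict inequality at any point where $W\neq 0$. When $n=3,4,5$ or $(M,g)$ is locally conformally flat, the Weyl term vanishes or is too weak, and one instead tests with a truncation of the Green's function $G$ for the conformal Laplacian based at $p$; the leading correction is then the ADM mass of the asymptotically flat manifold $(M\setminus\{p\}, G^{4/(n-2)}g)$. The positive mass theorem of Schoen–Yau forces this mass to be positive unless $(M,g)$ is conformal to the round sphere, yielding the required strict inequality in the remaining cases. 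Combining the two cases with the subcritical limit procedure produces the desired smooth positive solution and hence the conformal constant scalar curvature metric.
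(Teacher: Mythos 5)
The paper does not prove this theorem itself; it cites it as the classical resolution of the Yamabe problem due to Yamabe, Trudinger, Aubin, and Schoen. Your outline — conformal reformulation as a critical-exponent PDE, subcritical approximation, Aubin's strict-inequality criterion $Y(M,[g])<Y(S^n)$, the Weyl-tensor test functions for $n\geq 6$ non-locally-conformally-flat, and the Green's function plus positive mass theorem for the remaining cases — is precisely the combined strategy of those cited works, and the steps are correctly identified.
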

Theorem \ref{prob:Yamabe} was first proposed by Yamabe \cite{ojm/1200689814}, and the combined work by Yamabe \cite{ojm/1200689814}, Trudinger \cite{ASNSP_1968_3_22_2_265_0}, Aubin \cite{aubin1976equations} and Schoen \cite{schoen1984conformal} gave an affirmative answer to this problem.

There is a variety of interesting topics on the Yamabe invariants of closed manifolds. One of the most important conjectures on the Yamabe invariants was made by Schoen.
\begin{conjecture}[Schoen]\label{conj:schoen}
    The Yamabe invariant of the lens space is
    \begin{align*}
        \sigma(L(p,q))=\frac{\sigma(S^3)}{p^{2/3}}:=\sigma_p,
    \end{align*}
    where $p,q$ are relatively prime.
\end{conjecture}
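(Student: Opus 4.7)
The plan is to reduce Schoen's conjecture to an equivariant Yamabe problem on $S^3$ and then invoke a Hebey-Vaugon-type minimization argument. Writing $L(p,q) = S^3/\mathbb{Z}_p$ with $\mathbb{Z}_p$ acting freely by isometries of the round metric, every conformal class on $L(p,q)$ lifts uniquely to a $\mathbb{Z}_p$-invariant conformal class upstairs, and a direct substitution of a pulled-back function $u = \bar u \circ \pi$ in the Yamabe quotient gives
\[
Y_{\mathbb{Z}_p}\bigl(S^3,[g]\bigr) \;=\; p^{2/3}\, Y\bigl(L(p,q),[\bar g]\bigr),
\]
since volumes, Dirichlet energies, and $\int R\,u^2\,dV$ upstairs are exactly $p$ times those downstairs while the $L^{2n/(n-2)}$-norm picks up a factor $p^{(n-2)/n} = p^{1/3}$. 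Taking suprema over invariant conformal classes yields $\sigma(L(p,q)) = p^{-2/3}\,\sigma_{\mathbb{Z}_p}(S^3)$, so the conjecture becomes the identity $\sigma_{\mathbb{Z}_p}(S^3) = \sigma(S^3)$.

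The lower bound $\sigma_{\mathbb{Z}_p}(S^3) \ge \sigma(S^3)$ is immediate: the round class is $\mathbb{Z}_p$-invariant and its Yamabe minimizer is a constant (hence invariant), so $Y_{\mathbb{Z}_p}(S^3,[g_{\mathrm{round}}]) = Y(S^3,[g_{\mathrm{round}}]) = \sigma(S^3)$. The substance of the conjecture is therefore the opposite inequality
\[
\sigma_{\mathbb{Z}_p}(S^3) \;\le\; \sigma(S^3),
\]
asserting that no $\mathbb{Z}_p$-invariant conformal class on $S^3$ outperforms the round one. For each invariant class $[g]$ the strategy is to take a minimizing sequence of invariant test functions for the restricted Yamabe quotient and apply concentration-compactness. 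If the sequence is precompact in $L^6$, the limit is a smooth invariant Yamabe minimizer whose Einstein-Hilbert energy is bounded by $Y(S^3,[g]) \le \sigma(S^3)$ via Aubin's classical inequality. If it concentrates, freeness of the action forces simultaneous concentration at every point of some $\mathbb{Z}_p$-orbit, so the bubble cost is at least $p$ times the sharp Aubin constant, i.e.\ at least $p^{2/3}\sigma(S^3)$ in the normalization above; this is precisely what the improved Hebey-Vaugon estimate announced in the paper is designed to rule out under the stated topological hypotheses.

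The main obstacle is the concentration analysis in dimension three, where the Aubin test-function argument is least forgiving. Excluding sub-threshold bubbles requires a sharp expansion of the Yamabe quotient against invariant bubbles supported near a distinguished orbit, together with a positive-mass-type lower-order correction; in the equivariant setting this expansion must be carried out simultaneously at the $p$ replicas of the bubble, and the correction must absorb the cross terms coming from different replicas. Producing this expansion, and identifying exactly which topological configurations of the orbit set make it succeed, is presumably the technical heart of the paper's improvement over Hebey-Vaugon. Without such a refinement one only obtains the weaker bound $\sigma_{\mathbb{Z}_p}(S^3) \le p^{2/3}\sigma(S^3)$, which yields just the trivial inequality $\sigma(L(p,q)) \le \sigma(S^3)$ rather than Schoen's sharp constant.
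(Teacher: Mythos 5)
This statement is a \emph{conjecture}, not a theorem of the paper: the authors record it only as motivation, note that the case $p=2$ is the theorem of Bray--Neves, and state explicitly that for $p>2$ it remains open. The paper contains no proof of it, and its main theorem does not imply it (for a free $\mathbb{Z}_p$-action on $S^3$ the main theorem gives $\sigma_{\mathbb{Z}_p}(S^3)\le \sigma(\RP^3)\,p^{2/3}=(p/2)^{2/3}\sigma(S^3)$, which equals the conjectured value only when $p=2$). So there is no paper proof to compare against, and your proposal must be judged as a standalone argument --- which it is not: you yourself defer the decisive step (the sharp concentration/bubble expansion) to ``presumably the technical heart of the paper,'' but that content does not exist in the paper and does not exist in the literature for $p>2$.

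Beyond that structural issue, two concrete points. First, your covering reduction $\sigma(L(p,q))=p^{-2/3}\sigma_{\mathbb{Z}_p}(S^3)$ and the lower bound $\sigma_{\mathbb{Z}_p}(S^3)\ge\sigma(S^3)$ are correct and standard; this is indeed why the conjecture is equivalent to $\sigma_{\mathbb{Z}_p}(S^3)\le\sigma(S^3)$. Second, your treatment of the compact (non-concentrating) case is wrong as written: if the invariant minimizing sequence converges, the limit realizes $Y_{\mathbb{Z}_p}(S^3,[g])$, which is an infimum over the \emph{smaller} set of invariant conformal factors and hence satisfies $Y_{\mathbb{Z}_p}(S^3,[g])\ge Y(S^3,[g])$; Aubin's inequality bounds $Y(S^3,[g])$ by $\sigma(S^3)$ but gives no upper bound on the equivariant constant. (In fact Hebey--Vaugon show the equivariant infimum is always attained, so there is no dichotomy to exploit: the entire difficulty is that the attained value can a priori be as large as $p^{2/3}\sigma(S^3)$.) Ruling out the intermediate values between $\sigma(S^3)$ and $p^{2/3}\sigma(S^3)$ is precisely the open problem, and neither your sketch nor this paper closes it.
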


When $p=2$, this conjecture was solved by the celebrated work of Bray-Neves \cite{bray2004classification} via inverse mean curvature flow, and they gave a classification of $3$-manifolds with $\sigma$-invariants greater than or equal to $\mathbb{RP}^3$. Akutagawa and Neves later completed this classification \cite{akutagawa20073}. Recently, Mazurowski and the second author gave a new proof of Bray-Neves' result via harmonic function. \cite{mazurowski2023yamabe}.

However, when $p>2$, this conjecture remains open. The computation of Yamabe invariant of lens space is equivalent to considering the Yamabe invariant of $S^3$ under the corresponding isometry group control. Conjecture \ref{conj:schoen} is closely related to the Equivalent Yamabe Problem which was first proposed by Hebey and Vaugon \cite{hebey1993probleme}.

\begin{problem}[Equivariant Yamabe Problem]
    Given a Riemannian manifold $(M^n,g)$ with dimension greater than or equal to $3$, and suppose $G$ is a compact subgroup of the isometry group, find a $G$-invariant metric conformal to $g$ with constant scalar curvature.
\end{problem}

One can similarly define the equivariant Yamabe constant and Yamabe invariant as follows.

\begin{definition}[Equivariant Yamabe constant and Yamabe invariant]
  Suppose $(M^n,g)$ is a Riemannian manifold, $n\geq 3$, $G$ is a compact subgroup of the isometry group, then one can define the $G$-invariant Yamabe constant as:
  \begin{align*}
      Y_G(M,[g]):=\inf_{\tilde{g}\in [g]^G}E(\tilde{g}),
  \end{align*}
  where
  \begin{align*}
      [g]^G:=\{\tilde{g}: \tilde{g} \text{ is conformal to } g \text{ and } G\text{-invariant} \}.
  \end{align*}
  The $G$-invariant Yamabe constant is defined as
  \begin{align*}
      \sigma_G(M):=\sup_{[g]^G\in\mathcal C^G(M)}Y(M,[g]^G),
  \end{align*}
  where $\mathcal C^G(M):=\{[g]^G: [g]^G \text{ is a }G\text{-invariant conformal class on } M\}$.
\end{definition}

It is well-known that the Yamabe invariant of $S^n$ provides an upper bound for all Yamabe invariants of closed manifolds. Hebey and Vaugon \cite{hebey1993probleme} also proved similar results for the equivariant Yamabe invariant, and they gave an affirmative answer to the Equivariant Yamabe Problem.
\begin{theorem}
    [Hebey and Vaugon \cite{hebey1993probleme}]
    Suppose $(M^n,g)$ is a smooth compact Riemannian $G$-manifold, then there exists $g_0\in[g]^G$ realizing $Y(M,[g]^G)$, and
    \begin{align*}
        Y(M,[g]^G)\leq \sigma(S^n)\inf_{p\in M}\left(\card G\cdot p\right)^{\frac{2}{n}},
    \end{align*}
    where $\card(G\cdot p)$ denotes the cardinality of the orbit $G\cdot p$.
\end{theorem}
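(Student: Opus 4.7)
My plan is to treat the two assertions separately: the sharp upper bound via an equivariant test-function construction, and the existence of a minimizer via a concentration-compactness dichotomy modeled on Aubin's approach to the classical Yamabe problem.

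First, for the upper bound, I would assume the infimum is finite and fix $p\in M$ with $k:=\card(G\cdot p)<\infty$, enumerating $G\cdot p=\{p_1,\ldots,p_k\}$. The first step is to average the exponential chart at $p$ under the compact stabilizer $G_p$ to produce a $G_p$-equivariant normal chart on a small geodesic ball, and then transport by the $G$-action to obtain $k$ pairwise disjoint (for small radius), mutually $G$-compatible charts around the orbit points. On the chart at $p$ I place a standard Aubin bubble
\[
u_\epsilon(x) = \eta(|x|)\Bigl(\frac{\epsilon}{\epsilon^2+|x|^2}\Bigr)^{(n-2)/2},
\]
with a fixed radial cutoff $\eta$, and let $\tilde u_\epsilon$ be the function on $M$ obtained by transporting $u_\epsilon$ to each of the $k$ charts via the group action; by construction $\tilde u_\epsilon$ is $G$-invariant and supported in a disjoint union of $k$ small balls. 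A routine scalar-curvature expansion shows that as $\epsilon\to 0$ the numerator of $E(\tilde u_\epsilon)$ is $k$ times the single Euclidean-bubble numerator while the denominator is $k^{(n-2)/n}$ times the single-bubble denominator; the ratio tends to $k^{2/n}\sigma(S^n)$, and passing to the infimum over $p$ yields the upper bound.

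Next, for the existence of a minimizer, I would take a $G$-invariant minimizing sequence $u_j\in (H^1(M,g))^G$ normalized by $\|u_j\|_{L^{2n/(n-2)}}=1$ and extract a weak $H^1$-limit $u_\infty\ge 0$. If $u_\infty\not\equiv 0$, lower semicontinuity of the Dirichlet energy combined with the standard scaling argument for critical exponents yields a minimizer directly. Otherwise the sequence concentrates, and I would apply a $G$-equivariant version of Lions' concentration-compactness principle: the concentrated mass must be supported on a $G$-invariant set, hence on a single $G$-orbit of some cardinality $m$, and this configuration contributes at least $\sigma(S^n)m^{2/n}$ to the energy in the limit. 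Thus whenever the strict inequality $Y(M,[g]^G)<\sigma(S^n)\inf_p(\card G\cdot p)^{2/n}$ holds, concentration is ruled out and a minimizer exists.

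The hard part will be the equality case $Y(M,[g]^G)=\sigma(S^n)\inf_p(\card G\cdot p)^{2/n}$, where the dichotomy above does not rule out concentration of a minimizing sequence onto a minimal orbit. To handle this I would refine the upper-bound test function to capture the next-order scalar-curvature correction at the concentration orbit, which generically upgrades the inequality to strict, unless $(M,g)$ is conformally and $G$-equivariantly isometric to the round sphere with the corresponding quotient action --- in which case the minimizer is explicit and given by the standard bubble. Establishing this strict-versus-sharp dichotomy, together with the equivariant concentration-compactness step, will be the technical heart of the argument.
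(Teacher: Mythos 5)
This statement is quoted from Hebey--Vaugon \cite{hebey1993probleme} and the paper supplies no proof of it, so there is nothing internal to compare against; I am judging your outline on its own terms. The upper-bound half is correct and standard: the radial Aubin bubble in normal coordinates at $p$ is automatically $G_p$-invariant (the stabilizer acts orthogonally through the exponential map), the $k$ transported bubbles have disjoint supports for small $\epsilon$, so the numerator scales like $k$ and the $L^{2n/(n-2)}$-denominator like $k^{(n-2)/n}$, giving the factor $k^{2/n}$ in the limit. The concentration-compactness half is also sound as far as it goes: a finite $G$-invariant concentration set is a union of finite orbits, an orbit of cardinality $m$ costs at least $\sigma(S^n)m^{2/n}$, so strict inequality forces strong convergence and hence a minimizer.

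The genuine gap is the borderline case $Y(M,[g]^G)=\sigma(S^n)\bigl(\inf_p\card(G\cdot p)\bigr)^{2/n}$, and your proposed fix does not close it: upgrading the inequality to strict ``unless $(M,g)$ is conformally and equivariantly the round sphere'' is \emph{precisely} the Hebey--Vaugon conjecture recorded in this paper's introduction, which the combined work of Hebey--Vaugon and Madani \cite{MADANI2010241} has verified only for $3\le n\le 37$. It is not a routine next-order scalar-curvature correction: because the concentration points are pinned to a minimal orbit, you cannot relocate the bubble to a point where the Weyl tensor is nonvanishing, and for large $n$ the intermediate orders of vanishing of the Weyl tensor at that orbit defeat both the Aubin-type local expansion and the Schoen-type positive-mass argument. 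So as written your plan reduces the cited theorem (which asserts \emph{unconditional} existence of a minimizer) to a strictly harder, still partially open statement; a complete argument must either restrict to the dimensions where the conjecture is known (which suffices for this paper, since $n=3$), or handle the equality case by a route that does not pass through the conjecture.
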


Hebey and Vaugon also conjectured that
\begin{conjecture}
    If $(M^n,g)$ is not comformal to the standard metric on $S^n$ or if the action $G$ has no fixed point, then 
    \begin{equation}\label{eq: oringinal upper bound}
        Y(M,[g]^G)< \sigma(S^n)\inf_{p\in M}\left(\card G\cdot p\right)^{\frac{2}{n}}.
    \end{equation}
\end{conjecture}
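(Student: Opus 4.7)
The plan is to carry out a test-function argument in the spirit of Aubin and Schoen, adapted to the $G$-equivariant setting, where in dimension $3$ the positive mass theorem will supply the negative correction that promotes the Hebey-Vaugon non-strict bound to a strict one. Invoking the Hebey-Vaugon existence theorem stated above, I would first pass to a $G$-invariant minimizer of $E$ within the conformal class, so without loss of generality $g$ has constant scalar curvature. I would then choose $p_0\in M$ with $\card(G\cdot p_0)=k:=\inf_{p\in M}\card(G\cdot p)$ and list the orbit as $G\cdot p_0=\{p_1,\ldots,p_k\}$.

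Next, in conformal normal coordinates around $p_0$, I would place a standard Aubin bubble $U_\epsilon(x)=(\epsilon/(\epsilon^2+|x|^2))^{(n-2)/2}$, cut it off smoothly at a scale $\delta$ less than half the injectivity radius, and push it forward under the $G$-action to produce a $G$-invariant test function $u_\epsilon$ consisting of $k$ disjoint bubbles at $p_1,\ldots,p_k$. By equivariance each bubble contributes identically to the numerator and the denominator of the Sobolev quotient; the disjoint support computation then gives $E(u_\epsilon)=k^{2/n}\,\mathcal{E}(\phi_\epsilon)$, where $\mathcal{E}(\phi_\epsilon)$ is the single-bubble energy at $p_0$. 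Since the classical bubble analysis produces $\mathcal{E}(\phi_\epsilon)\to\sigma(S^n)$, the leading term exactly matches the Hebey-Vaugon upper bound, so the strict inequality has to come from the subleading terms.

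In dimension $3$ I would follow Schoen's original idea: modify $u_\epsilon$ away from $p_0$ using the Green's function $G_L$ of the conformal Laplacian $L=-\Delta+\tfrac{1}{8}R$ based at $p_0$. The expansion $G_L(x,p_0)=\dist(x,p_0)^{-1}+A+O(\dist(x,p_0))$ in conformal normal coordinates contributes a correction to $E(u_\epsilon)$ of the form $-c\,A\,\epsilon+o(\epsilon)$ for a universal positive constant $c$, where $A$ is the ADM mass of the asymptotically flat blow-up $(M\setminus\{p_0\},G_L^4 g)$. The positive mass theorem of Schoen-Yau gives $A>0$ unless $M$ is conformally equivalent to $S^3$, so when $M$ is not conformal to the round sphere this correction is strictly negative and $E(u_\epsilon)<k^{2/3}\sigma(S^3)$ for small $\epsilon$, yielding (\ref{eq: oringinal upper bound}).

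The main obstacle will be the remaining case in which $M\cong S^n$ conformally yet $G$ has no fixed point. Here the Green's function mass vanishes at every point, so no first-order correction is available from the PMT, and one must exploit the orbit structure instead. My attempt would be to descend to the quotient: if $G$ acts freely, $M/G$ is a smooth spherical space form not isometric to $S^n$ and the non-equivariant strict inequality on this quotient pulls back; if $G$ has finite stabilizers but no fixed point, I would attempt the same argument on the orbifold $S^n/G$, together with a rigidity analysis of the equality case of the PMT. Handling this case uniformly — especially when orbit cardinalities vary and the quotient is a non-manifold orbifold — is where I expect the essential difficulty to lie, and presumably where the topological assumptions of the present paper would be needed to bypass the obstruction.
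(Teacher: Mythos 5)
This statement is labeled a \emph{conjecture} in the paper for a reason: it is the Hebey--Vaugon conjecture, which the paper does not prove and which remains open in general (it has been verified only for $3\le n\le 37$, by Hebey--Vaugon and Madani). So there is no proof in the paper to compare against, and your proposal should be judged as an attempt at an open problem. Within dimension $3$ your outline is essentially the known Hebey--Vaugon/Schoen strategy and is sound in broad strokes, but you misidentify the mechanism in the case you single out as the ``main obstacle.'' When $(M,g)$ is conformal to the round $S^n$ and $G$ has no fixed point, the minimal orbit $\{p_1,\dots,p_k\}$ has $k\ge 2$, and the correct test function is built from the $G$-invariant Green's function $\sum_i G_L(\cdot,p_i)$ (exactly the object $\green$ constructed in Section~2 of this paper). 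The constant term in its expansion at $p_i$ is $A_{p_i}+\sum_{j\ne i}G_L(p_i,p_j)$, and since $G_L>0$ when the Yamabe constant is positive, the off-diagonal interaction terms make this strictly positive even when the ADM mass $A_{p_i}$ vanishes. This is what yields strictness in the spherical, fixed-point-free case; no descent to $S^n/G$ is needed. Your proposed quotient/orbifold route is genuinely problematic when the action is not free (positive mass and Yamabe theory on orbifolds with singular strata is exactly the kind of machinery one wants to avoid), so you have replaced a solvable step with a harder one.

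The more serious gap is dimensional. The conjecture is stated for all $n\ge 3$, and your entire correction mechanism (Green's function expansion plus positive mass theorem) only closes the argument when the metric is sufficiently flat near the blow-up points --- in practice, when $n\le 5$, or when the Weyl tensor and enough of its derivatives vanish at a point of the minimal orbit. For $n\ge 6$ with $W(p_i)\ne 0$ the local Aubin correction of order $\epsilon^4$ dominates and gives strictness, but in the intermediate regime where $W$ vanishes to some but not sufficiently high order, neither the local term nor the mass term is known to control the expansion. Quantifying exactly how much vanishing of $W$ forces positivity of the relevant mass is precisely the content of the Hebey--Vaugon conjecture's reduction and of Madani's work, and it is where the problem is open for $n\ge 38$. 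Your proposal does not engage with this regime at all, so even granting every step you sketch, you would obtain the statement only in dimension $3$ (where it is already known), not the conjecture as stated.
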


This conjecture was verified for $n\in [3,37]$ by the combined work of Hebey-Vaugon \cite{hebey1993probleme} and Madani \cite{MADANI2010241}.

In dimension $3$, we improve \eqref{eq: oringinal upper bound}, and make it more sharp.

Before stating the Main Theorem, we introduce the necessary definitions and conditions we need.
\begin{definition}\label{Def: G-connected}
    A $G$-invariant subset $\Sigma\subset M$ is said to be {\em $G$-connected} if for any two connected components $\Sigma_1,\Sigma_2$ of $\Sigma$, there exists $g\in G$ so that $\Sigma_2 = g\cdot \Sigma_1$. 

    Additionally, if $\Sigma\subset M$ is $G$-connected with every connected component given by an embedded $2$-sphere, then we say $\Sigma$ is an {\em embedded $G$-connected union of $2$-spheres}. 
\end{definition}

\begin{definition}
    A connected surface $\Sigma\subset M$ is said to be {\em separating} if $M\setminus \Sigma$ has two connected regions. 
    Otherwise, we say $\Sigma$ is {\em non-separating}. 
\end{definition}
\begin{definition}
    For a closed $3$-manifold $Q^3$, the number $\alpha(Q^3)\geq 2$ is defined as the supremum of the Euler characteristic of the smooth surface $\Sigma$ (not necessarily connected) whose complements $Q\setminus\Sigma$ have exactly two connected regions. 
\end{definition}



With the above definitions, we state our main theorem as follows, where the topological assumptions are parallel and comparable to those in \cite{bray2004classification}.

\begin{theorem}\label{Thm: main theorem}
    Suppose $M^3$ is a smooth closed manifold, $G$ is a compact Lie group acting on $M$ by diffeomorphisms. Assume further that $M$ and $G$ satisfy the following conditions:
    \begin{enumerate}[label = (\roman*)]
        \item there is at least one orbit of finite cardinality, i.e. $\inf_{p\in M} \card(G\cdot p) < \infty $;
        \item $M$ can be expressed as $P\# Q$ where $P$ is prime and $\alpha(Q)=2$;
        \item if $M$ is diffeomorphic to $S^3$, then $\inf_{p\in M} \card(G\cdot p)\geq 2$, i.e. there is no fixed point under the actions of $G$; 
        \item if $M$ contains a non-separating embedded $2$-sphere, then $\inf_{p\in M} \card(G\cdot p)\geq 2$ and every embedded $G$-connected union of non-separating $2$-spheres is connected. 
    \end{enumerate}
    Then, we have
    \begin{equation}\label{eq:improved upper bound}
        \sigma_G(M)\leq \sigma(\mathbb{RP}^3)\left(\inf_{p\in M}(\card G\cdot p)\right)^{\frac{2}{3}}.
    \end{equation}
\end{theorem}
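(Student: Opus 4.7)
My plan is to argue by contradiction, adapting the $\sigma$-invariant rigidity program of Bray--Neves \cite{bray2004classification} — and in particular the harmonic-function proof of Mazurowski and the second author \cite{mazurowski2023yamabe} — to the $G$-equivariant setting. Set $k := \inf_{p\in M}\card(G\cdot p)$, which is finite by hypothesis (i). Assume toward contradiction that
\[
\sigma_G(M) > \sigma(\RP^3)\, k^{2/3}.
\]
By the Hebey--Vaugon resolution of the equivariant Yamabe problem, there exists a $G$-invariant conformal class $[g]^G$ and a representative $g\in[g]^G$ of constant positive scalar curvature realizing a $G$-equivariant Yamabe constant that still exceeds $\sigma(\RP^3)\, k^{2/3}$. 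After rescaling we may assume $\mathrm{Vol}(M,g)=k$, so that $R_g$ is a constant satisfying $R_g > 6(\sigma(\RP^3)/\sigma(S^3))\cdot (\text{standard normalization})$, or equivalently $R_g\cdot \mathrm{Vol}(M,g)^{2/3}>\sigma(\RP^3)\, k^{2/3}$.

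The next step is to produce an equivariant separating surface. Condition (ii) gives $M=P\# Q$ with $P$ prime and $\alpha(Q)=2$, so there is an embedded separating sphere $\Sigma_0\subset M$ realizing the connect-sum neck. Combined with a $G$-averaging/equivariant surgery argument, conditions (iii) and (iv) should allow me to upgrade $\Sigma_0$ to an embedded $G$-connected union of separating $2$-spheres $\Sigma$: condition (iii) prevents the collapse of a fixed point on the $S^3$ side, and condition (iv) prevents the appearance of disconnected $G$-orbits of non-separating spheres that would obstruct separation. In particular, each component of $M\setminus\Sigma$ must have nontrivial topology, with the $P$-summand sitting in one $G$-invariant component $\Omega$.

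The heart of the proof is then to run a $G$-equivariant version of the Mazurowski--Yao level-set argument on $\Omega$, or equivalently a $G$-equivariant weak inverse mean curvature flow in the spirit of Bray--Neves. I would construct a $G$-invariant harmonic function $u:\Omega\to[0,1]$ with pinning on the components of $\partial\Omega=\Sigma$, and apply the Stern-type monotonicity formula on each superlevel set. Summing the monotonicity inequality over the $k$ points of a minimizing orbit and using the $\mathbb{Z}/2$-structure provided by the separating sphere $\Sigma$ (which is precisely what distinguishes $\RP^3$ from $S^3$ in the Bray--Neves argument), I expect to arrive at
\[
R_g\cdot\mathrm{Vol}(M,g)^{2/3}\leq \sigma(\RP^3)\, k^{2/3},
\]
contradicting the choice of $g$ in the first step. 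The factor $\sigma(\RP^3)=\sigma(S^3)/2^{2/3}$ emerges from the combination of the orbit multiplicity $k$ (as in the classical Hebey--Vaugon inequality) with the doubling from the $\mathbb{Z}/2$ structure of $\Sigma$.

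The main obstacle is carrying out this last step rigorously. There are two intertwined difficulties: first, ensuring that the harmonic function (or IMCF) can be chosen $G$-invariant and that its possibly singular level sets interact well with the orbit structure of $G$, especially near orbits of small cardinality or near fixed points on the boundary $\Sigma$; and second, accounting precisely for the combinatorics of orbits meeting level sets so that the factor of $2$ from the separating sphere and the factor of $k$ from the minimizing orbit combine to give $\sigma(\RP^3)\, k^{2/3}$ rather than just $\sigma(S^3)\, k^{2/3}$. The topological hypotheses (ii)--(iv) are designed precisely to rule out the degenerate configurations in which this counting would break down.
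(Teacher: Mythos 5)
Your overall blueprint---equivariant Yamabe minimizer, a separating-sphere structure forced by (ii)--(iv), and a Stern/Miao-type level-set monotonicity yielding the factor $\sigma(\RP^3)k^{2/3}$---points in the right direction, but there is a genuine gap: you never perform the $G$-equivariant Green's function blow-up at a minimal orbit, and without it the level-set argument you describe cannot be run. The paper's mechanism is to take the $G$-averaged Green's function $\green$ of the conformal Laplacian $L_0=\triangle-\tfrac18 R_0$ at an orbit $G\cdot p$ of minimal cardinality $k$, pass to the scalar-flat asymptotically flat manifold $(M\setminus G\cdot p,\ \green^4 g_{_M})$ with exactly $k$ ends, and use the identity $Y(M,[g_{_M}]^G)=C(g_{AF})$ to reduce everything to estimating a Sobolev quotient of a compactly supported $G$-invariant test function on that blow-up. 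Your plan instead works on a compact piece $\Omega\subset M$ cut out by the connect-sum neck sphere: there the boundary condition ``$u\to 0$ at infinity'' is vacuous, the quantity you aim to bound ($R_g\cdot\mathrm{Vol}(M,g)^{2/3}$ on the closed manifold) is not what the harmonic-function computation controls, and the factor $k^{2/3}$ has no source---in the paper it arises because the blow-up has $k$ ends, giving $W_G(t)\le k\,\pi(2-e^{-t})^2$ after summing the one-end monotonicity over the ends.

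A second, related problem is your choice of surface. The monotonicity estimate $W(t)\le\pi(2-e^{-t})^2$ (the paper's Corollary 4.3) requires the initial surface to be a \emph{minimal} boundary of an asymptotically flat exterior region with $H_2=0$; the topological neck sphere of $P\# Q$, even after an (unspecified) equivariant surgery, has neither property. The paper instead takes the outermost minimal $2$-spheres of the trapped region in $(M\setminus G\cdot p, g_{AF})$: their existence uses (iii) (the blow-up is not $\R^3$), and Lemma 2.1 uses (iv) to show each is separating and each exterior region $\overline N_i$ is bounded by a single connected minimal sphere. That is where hypotheses (iii) and (iv) actually enter---not in upgrading the connect-sum neck. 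To repair your argument you would need to (a) insert the equivariant blow-up and the identification of $Y(M,[g_{_M}]^G)$ with the Sobolev constant $C(g_{AF})$, (b) replace your neck sphere by the outermost minimal horizon and prove its separating/connectedness properties from (iv), and (c) build the test function as $f\circ w_G$ with $f$ the optimal $\RP^3$-model profile, extended by a constant across the trapped region.
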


\subsection{Acknowledgements}
The authors would like to thank Prof. Xin Zhou for his helpful discussions on this topic. The second author would like to thank Professor Scheon for his helpful discussion on this topic during his visit at Cornell University. T.W. is partially supported by China Postdoctoral Science Foundation 2022M722844. X.Y. is supported by NSF grant DMS-1945178.

\section{Blowing up model and outermost horizon}

Given $M$ and $G$ as in the main theorem, let $\mu$ be the bi-invariant Haar measure on $G$ which has been normalized to $\mu(G)=1$. 

Note the main theorem is true if $Y(M,[g_{_M}]_G)\leq 0$ for every $G$-equivariant conformal class $[g_{_M}]_G$ of metrics on $M$. 
Therefore, we assume throughout the remainder of this paper that $g_{_M}$ is a $G$-invariant metric on $M$ with $Y(M,[g_{_M}]_G) > 0$. 
By the solution of the equivariant Yamabe problem \cite{hebey1993probleme}, we can further assume without loss of generality that $g_{_M}$ minimizes the Einstein–Hilbert functional in $[g_{_M}]_G$, and thus $(M, g_{_M})$ has constant positive scalar curvature $R_M\equiv R_0>0$. 
Denote by 
\[ L_0 = \triangle_{g_{M}} -\frac{1}{8}R_0. \]
In this section, we use the $G$-equivariant Green function to blowup $M$ at the orbit with minimal cardinality. 
Then, we investigate the outermost minimal surfaces, which will be taken as the initial data of inverse mean curvature flow.  

To begin with, by assumption (i) and the compactness of $M$, we can take $p\in M$ with $\card(G\cdot p) = \inf_{q\in M}\card(G\cdot q)$. 
Let $\green_p(x)$ be the Green's function of $L_0$ at $p$ so that 
\[L_0\green_p = 0 \quad \mbox{on $M\setminus p$}\qquad \mbox{and} \qquad\lim_{q\to p} \dist_{M}(p,q)\green_p(q) =1. \]
Since $R_0>0$, such a Green’s function $\green_p$ always exists and is positive. 
Then we take a smooth $G$-invariant function $\green$ on $M\setminus G\cdot p$ defined as
\[ \green(x) := \frac{\int_G \green_{p}(g\cdot x)d\mu(g)}{\mu(G_p)}, \]
where $G_p:=\{g\in G: g\cdot p=p\}$ is the isotropy group of $p$ in $G$. 
Note $G_p$ has finite index $[G:G_p]<\infty$, which implies $\mu(G_p)>0$ and $\green$ is well defined. 
Additionally, one easily verifies that $\green$ is $G$-invariant, 
\[ L_0\green = 0\quad \mbox{on $M\setminus G\cdot p$}\qquad \mbox{and} \qquad\lim_{q\to G\cdot p} \dist_{M}(G\cdot p,q)\green(q) =1. \]
We call such a function $\green$ the $G$-invariant Green's function of $L_0$ at $G\cdot p$.

Next, consider the $G$-invariant metric 
\[g_{AF} := \green^4 \cdot g_{_M}\]
on $M\setminus G\cdot p$. 
It then follows from the choice of $\green$ that $G\cdot p$ has been sent to infinity and the scalar curvature of $g_{AF}$ is given by $R_{AF}=-8\green^{-5}L_0(\green)=0$. 
Moreover, $(M\setminus G\cdot p, g_{AF})$ is an {\em asymptotically flat Riemannian $G$-manifold}. 
Namely, there is a $G$-invariant compact set $K\subset M$ so that $M\setminus K$ is a finite union of ends $M_1,\dots,M_{k_0}$ satisfying
\begin{itemize}
    \item there is a diffeomorphism $\Phi_i: M_i\to \R^3\setminus \Clos(\mB^3_1(0)$ for each $M_i$, where $\Clos(\mB^3_1(0))$ is the standard closed unit ball;
    \item by writing the metric $g_{AF}$ as $g_{ij}$ in the local coordinate chart of $\Phi_i$, we have $g_{ij}=\delta_{ij} + O(|x|^{-1})$, $g_{ij,k}=O(|x|^{-2})$, and $g_{ij,kl}=O(|x|^{-3})$.
\end{itemize}
Note $k_0=\card(G\cdot p)<\infty$ and $\cup_{i=1}^{k_0}M_i$ is $G$-connected with each $M_i$ corresponding to a point in $G\cdot p$. 

Define then 
\[C(g_{AF}) := \inf \left\{ \frac{\int_M 8|\nabla u|^2}{(\int_M u^6)^{\frac{1}{3}}} : u\in H^1_G(M\setminus G\cdot p,g_{AF}) \mbox{ with compact support}\right\} ,\]
where $H^1_G(M\setminus G\cdot p,g_{AF})$ is the space of $G$-invariant $H^1$-Sobolev functions in $(M\setminus G\cdot p,g_{AF})$. 
Combining the constructions and $R_{AF}=0$ with a cut-off trick (cf. \cite[(3)]{bray2004classification}), we conclude 
\[Y(M,[g_{_M}]_G) = C(g_{AF}), \]
and thus it is sufficient to find a nice $G$-invariant test function $u\in H^1_G(M\setminus G\cdot p,g_{AF})$ with compact support to estimate $Y(M,[g_{_M}]_G)$. 

To construct such a $G$-invariant test function, we need to introduce some useful concepts and results of {\em outermost minimal surfaces}, which are well-known to experts. 
Firstly, define 
\[K_1:=\Clos\left( \bigcup_{\Sigma\in\mathcal{M}}\Sigma \right),\]
where $\mathcal{M}$ is the set of all smooth compact immersed minimal surfaces in $(M\setminus G\cdot p,g_{AF})$. 
Note $\mathcal{M}$, as well as $K_1$, is non-empty since $(M\setminus G\cdot p,g_{AF})$ is not topologically $\R^3$ (see (iii) in the main theorem). 
Additionally, $K_1$ is compact since there exists a foliation near infinity formed by positive mean curvature spheres. 
Moreover, because $g\cdot \Sigma\in \mathcal{M}$ provided $\Sigma\in\mathcal{M}$, we have $K_1$ is a $G$-invariant subset of $M\setminus G\cdot p$.

Next, consider the connected components $\{U_i\}_{i\in I}$ of $M\setminus (G\cdot p \cup K_1)$, and define the {\em trapped region} $K$ by 
\[K:= K_1\bigcup \left( \bigcup_{ \{ i\in I: U_i\mbox{ is bounded}\}} U_i \right),\]
which is also a non-empty $G$-invariant compact subset of $M\setminus G\cdot p$. 
Combining the $G$-invariance with a result of Meeks-Simon-Yau \cite{meeks1982embedded} (see also \cite[Lemma 4.1]{huisken2001inverse}), we conclude the topology boundary $\partial K$ of $K$ is $G$-invariant and is also a union of smooth embedded minimal $2$-spheres. 
In addition, it also follows from \cite[Lemma 4.1]{huisken2001inverse} that $M\setminus (G\cdot p\cup K)$ is $G$-connected with $k_0$ connected components $N_1,\dots,N_{k_0}$ corresponding to each end of $(M\setminus G\cdot p, g_{AF})$ (as well as each point of $G\cdot p$) such that 
\begin{itemize}
    \item[(1)] $\overline{N}_i$ is connected and asymptotically flat with compact minimal boundary;
    \item[(2)] $\overline{N}_i$ is diffeomorphic to $\R^3$ minus a finite union of open $3$-balls with disjoint closures; 
    \item[(3)] there is no other compact minimal surface in $\overline{N}_i$ except $\partial \overline{N}_i$,
\end{itemize}
where $\overline{N}_i$ is the metric completion of $N_i$. 
Therefore, we can take $\{\Sigma^{(j)}=\cup_{l=1}^{L_j}\Sigma^{(j)}_l\}_{j=1}^J$ as some disjoint embedded $G$-connected unions of minimal $2$-spheres in $(M\setminus G\cdot p, g_{AF})$ so that $\cup_{i=1}^{k_0} N_i$ is the unbounded $G$-connected component of $M\setminus\Gamma$, where
\[ \Gamma:= \cup_{j=1}^J \Sigma^{(j)} = \cup_{j=1}^J \cup_{l=1}^{L_j} \Sigma^{(j)}_l. \] 
(Note $\Sigma^{(j)}_l$ may be a non-separating sphere as a component of $K$.) 
Recall $\{\overline{N}_i\}_{i=1}^{k_0}$ is known as the set of {\em exterior regions}, and $\{\Sigma^{(j)}_l\}$ is known as the set of {\em outermost minimal $2$-spheres}. 

At the end of this section, we show $J=1$ and $L_1=k_0$ provided the topological assumptions in the main theorem. 

\begin{lemma}\label{Lem: outermost separate}
    Let $M$ and $G$ be given as in the main theorem. 
    Then, using the above notations, each $\Sigma^{(j)}_l$ separates $M\setminus G\cdot p$ into two connected components. 
    Therefore, each $\partial \overline{N}_i=\partial N_i$ is a connected embedded minimal $2$-sphere (i.e. $L_1=k_0$) and $\cup_{i=1}^{k_0}\partial \overline{N}_i$ is an embedded $G$-connected union of minimal $2$-spheres (i.e. $J=1$). 
\end{lemma}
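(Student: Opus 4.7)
The plan is to proceed by contradiction: suppose some outermost minimal 2-sphere $\Sigma^{(j)}_l$ fails to separate $M\setminus G\cdot p$. Since $G\cdot p$ is a finite point set in $M$, adding or removing these isolated points does not change the number of connected components of the complement of any 2-sphere, so $\Sigma^{(j)}_l$ is equivalently a non-separating embedded 2-sphere in the closed manifold $M$. This triggers condition (iv): the $G$-action has no fixed point (i.e., $\inf_{p\in M}\card(G\cdot p)\geq 2$), and every embedded $G$-connected union of non-separating 2-spheres in $M$ must itself be connected.

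Next, I would apply (iv) to the $G$-connected union $\Sigma^{(j)}$ containing $\Sigma^{(j)}_l$. Every component of $\Sigma^{(j)}$ has the form $g\cdot \Sigma^{(j)}_l$ for some $g\in G$, and since $G$ acts by diffeomorphisms, each component is again a non-separating 2-sphere. Hence $\Sigma^{(j)}$ is an embedded $G$-connected union of non-separating 2-spheres, and (iv) forces it to be connected. Thus $\Sigma^{(j)}=\Sigma^{(j)}_l$ is $G$-invariant, i.e.\ $g\cdot \Sigma^{(j)}_l = \Sigma^{(j)}_l$ for every $g\in G$. Let $\overline{N}_{i_0}$ be the unique exterior region whose topological boundary contains $\Sigma^{(j)}_l$; uniqueness follows because the $\overline{N}_i$'s are pairwise disjoint. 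For every $g\in G$, the inclusion $\Sigma^{(j)}_l = g\cdot \Sigma^{(j)}_l \subset g\cdot \partial \overline{N}_{i_0} = \partial(g\cdot \overline{N}_{i_0})$ forces $g\cdot \overline{N}_{i_0} = \overline{N}_{i_0}$. Since the $G$-action on $\{\overline{N}_i\}_{i=1}^{k_0}$ is compatible with the $G$-action on $G\cdot p$ under the end-at-infinity correspondence, the corresponding point $p_{i_0}\in G\cdot p$ becomes $G$-fixed, contradicting the no-fixed-point assertion of (iv).

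This shows each $\Sigma^{(j)}_l$ separates $M\setminus G\cdot p$. For the ``Therefore'' consequences, the $G$-transitive action on $\{\overline{N}_i\}_{i=1}^{k_0}$ guarantees every $\partial \overline{N}_i$ has the same number of components $m$, and both $L_1 = k_0$ and $J=1$ reduce to showing $m=1$. To rule out $m\geq 2$, I would use the tree structure on the adjacency graph between exterior regions and interior components of $K$ that is induced by the separating property of every outermost sphere: capping off each boundary sphere identifies $M$ as a non-trivial connect sum of the $k_0(m-1)+1$ interior $K$-components after capping, which I would then play against condition (ii) that $M = P\#Q$ with $P$ prime and $\alpha(Q)=2$. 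I expect this last step, extracting a contradiction with $\alpha(Q)=2$ from the connect-sum structure, to be the main technical obstacle, requiring a delicate interaction between the prime decomposition of $M$ and the definition of $\alpha$.
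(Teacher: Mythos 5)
There is a genuine gap at the pivotal step of your first (and main) argument: the assertion that there is a \emph{unique} exterior region $\overline{N}_{i_0}$ whose boundary contains the $G$-invariant non-separating sphere $\Sigma^{(j)}_l$, ``because the $\overline{N}_i$'s are pairwise disjoint.'' The open regions $N_i$ are pairwise disjoint, but their topological boundaries in $M$ need not be: a non-separating sphere can occur as a component of the trapped region $K$ with exterior on \emph{both} sides (the paper explicitly flags this possibility in the parenthetical remark before the lemma), in which case the single sphere $\Sigma^{(j)}_l$ lies in $\partial N_1\cap\partial N_2$ for two distinct exterior regions, one on each side. In that configuration a group element can preserve $\Sigma^{(j)}_l$ while reversing its two sides, hence swapping $N_1$ and $N_2$; no $G$-invariant end, and no fixed point in $G\cdot p$, is produced, so your contradiction evaporates precisely in the configuration that actually needs to be excluded (compare the $S^2\times S^1$ model with $\mathbb{Z}_2$ acting on the $S^1$ factor, Figure 2). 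The paper's proof confronts exactly this case: from the $G$-invariance and connectedness of $\Sigma^{(1)}$ it deduces $k_0=2$ with $N_1$ and $N_2$ glued along $\Sigma^{(1)}$, and then applies condition (iv) a \emph{second} time, now to the boundary $S_1\cup S_2$ of a small $G$-invariant tubular neighborhood of $\Sigma^{(1)}$, which is a disconnected embedded $G$-connected union of non-separating $2$-spheres. This second invocation of (iv) is the essential idea your proposal is missing; your argument only covers the sub-case in which the region adjacent to $\Sigma^{(j)}_l$ really is unique.

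The opening reductions are fine and agree with the paper: non-separating in $M\setminus G\cdot p$ is equivalent to non-separating in $M$, and (iv) forces $\Sigma^{(j)}=\Sigma^{(j)}_l$ to be connected and $G$-invariant. For the ``Therefore'' clause, however, you offer only an unexecuted plan (capping off boundary spheres and playing the resulting connected-sum structure against condition (ii)), and you yourself flag its key step as an unresolved obstacle; the paper instead deduces this part from the separating property just established together with property (2), that each $\overline{N}_i$ is $\R^3$ minus finitely many disjoint open balls. So as written, neither half of the proposal is complete, and the first half would need the tubular-neighborhood argument to be repaired.
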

\begin{proof}
    Suppose $\Sigma^{(1)}_1$ is non-separating. 
    Without loss of generality, we assume $\Sigma^{(1)}_1\subset \partial N_1$. 
    Then by assumption (iv) in the main theorem, $\Sigma^{(1)}=\Sigma^{(1)}_1$ is connected and $G$-invariant, i.e. $L_1=1$. 
    Since $k_0\geq 2$, we can take any other $N_i$ with $i\in\{2,\dots,k_0\}$. 
    By the $G$-connectivity of $\cup_{i=1}^{k_0}N_i$, there exists $g_i\in G$ so that $N_i=g_i\cdot N_1$ and $g_i\cdot \Sigma^{(1)}\subset \partial N_i$. 
    We then conclude from the $G$-invariance and the connectivity of $\Sigma^{(1)}$ that $k_0=2$ and $N_1$ glued with $N_2$ along $\Sigma^{(1)}$ in $M\setminus G\cdot p$. 
    In addition, one easily checks that any other connected component $\Sigma^{(j)}_1$ of $\partial N_1$ is also non-separating. 
    Hence, $L_j=1$ for every $j=1,\dots,J$, and $M\setminus G\cdot p$ is the connect sum of $N_1,N_2$ along the disjoint $G$-invariant $2$-spheres $\{\Sigma^{(j)}\}_{j=1}^J$. 
    Since $\Sigma^{(1)}$ is $2$-sided as a boundary, we can take $r>0$ sufficiently small so that the boundary of a small $r$-neighborhood $B_{r}(\Sigma^{(1)})$ of $\Sigma^{(1)}$ in $M\setminus G\cdot p$ is an embedded $G$-connected union of $2$-spheres $S_1\cup S_2$ with $S_i\subset N_i$. 
    This contradicts assumption (iv) in the main theorem as $S_1\cup S_2$ is not connected. 

    The last statement follows easily from the separating property of each $\Sigma^{(j)}_l$ and (2). 
\end{proof}


\section{Some Intuition}
In this section, we explain the intuition behind our assumptions in the Main Theorem \ref{Thm: main theorem}.

One of the most celebrated results in Yamabe invariants is the $\sigma$-classification of $3$-manifolds, which was first proven by Bray-Neves \cite{bray2004classification}, and the classification was later fully completed by Akutagawa-Neves \cite{akutagawa20073}. It is natural to conjecture that there is a corresponding classification in dimension $3$ for the equivariant Yamabe invariant.

To explain the intuition behind the assumptions of the Main Theorem \ref{Thm: main theorem}, we look into the following examples:

\begin{example}\label{ex:sphere}
    $M=S^3$, $G=\mathbb Z_2$, the action is the antipodal identification. In this case, it is obvious that $\sigma_G(M)=2^{2/3}\sigma_2$.
\end{example}
Example \ref{ex:sphere} explains the no fixed point assumption (Theorem \ref{Thm: main theorem}(iii)) when the manifold is $S^3$.

\begin{example}\label{ex:rp2timess1}
    $M=S^2\times S^1$, $G=\mathbb Z_2$, the action is the antipodal identification on $S^2$. Since this action is free, the $G$-invariant Yamabe invariant should be:
    \begin{align*}
         \sigma_G(M)=2^{2/3}\sigma(\mathbb{RP}^2\times S^1)=2^{2/3}\sigma_2.
    \end{align*}
   
\end{example}

\begin{example}\label{ex:s2timess1}
    $M=S^2\times S^1$, $G=\mathbb Z_2$, the action is the antipodal identification on $S^1$. Since the action is free, the $G$-invariant Yamabe invariant should be:
    \begin{align*}
    \sigma_G(M)=2^{2/3}\sigma(S^2\times S^1)=2^{2/3}\sigma_1.
    \end{align*}
\end{example}
Example \ref{ex:rp2timess1} and Example \ref{ex:s2timess1} are the most important cases we considered when we added our assumptions (Theorem \ref{Thm: main theorem}(iv)). The main distinction between them is whether $M/G$ contains a non-separating $2$-sphere. An appropriate assumption should be able to distinguish these two cases. 
To be exact, we now consider the $G$-invariant blowing up models of Example \ref{ex:rp2timess1} and Example \ref{ex:s2timess1}.

\begin{enumerate}
    \item When $\mathbb Z_2$ acts on $S^2$: the $\mathbb Z_2$-invariant blowing up model is an asymptotically flat manifold with $2$ symmetric ends and a $\mathbb Z_2$-invariant horizon. Since $M/G\cong\mathbb{RP}^2\times S^1$ contains no non-separating $2$-sphere, the horizon has two components, and each component is separating. See Figure \ref{Fig: 1}.
    \item When $\mathbb Z_2$ acts on $S^1$: the $\mathbb Z_2$-invariant blowing up model is an asymptotically flat manifold with $2$ symmetric ends and a $\mathbb Z_2$ horizon which contains two connected components, each of the components is not separating (since $M/G\cong S^2\times S^1$).  See Figure \ref{Fig: 2}.
\end{enumerate}
\begin{remark}
    If the $\mathbb Z_2$-action on $M$ is the reflection action on $S^2$, one gets $2^{\frac{2}{3}}\sigma_2$. If the $\mathbb Z_2$-action is the reflection action on $S^1$, one also gets
    $2^{\frac{2}{3}}\sigma_2$, since the quotient space is $S^2\times [0,1]$, whose Yamabe invariant is $\sigma_2$, see \cite{schwartz2009monotonicity}.
\end{remark}

For finite group $G$ and general $G$-manifolds, a non-separating $2$-sphere in (the smooth part of) $M/G$ shall be lifted to a $G$-connected union of non-separating $2$-spheres in $M$ with more than one connected components. 
Therefore, we proposed condition (iv) in Theorem \ref{Thm: main theorem} to eliminate the existence of non-separating $2$-spheres in (the smooth part of) $M/G$, which ensures the separating property of the $G$-invariant horizon (Lemma \ref{Lem: outermost separate}). 
As long as the horizon in blowing up models only contains separating components (e.g. Example \ref{ex:rp2timess1}), one can use the level set techniques (either inverse mean curvature flow, see \cite{bray2004classification} or the harmonic functions, see \cite{mazurowski2023yamabe}) to construct a test function and obtain the improved upper bound \eqref{eq:improved upper bound} of equivariant Yamabe invariant. For blowing up models with horizons containing non-separating components (e.g. Example \ref{ex:s2timess1}), the level set techniques no longer work.
\begin{figure}[h]
    \centering
    \begin{subfigure}{0.4\linewidth}
    \centering
        \includegraphics[width=2.5in]{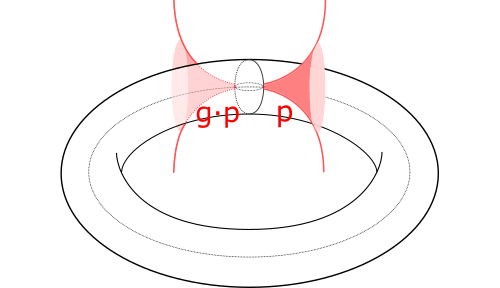} 
        \caption{$\mathbb Z_2$ acts on $S^2$}\label{Fig: 1}
    \end{subfigure}
    \hspace{1cm}
    \begin{subfigure}{0.4\linewidth}
        \centering
        \includegraphics[width=2.5in]{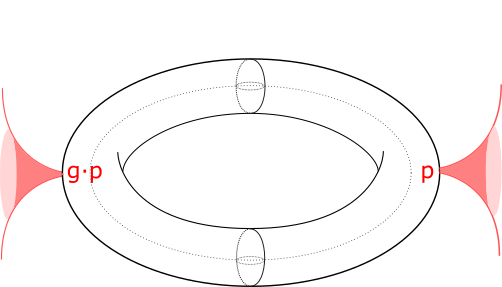}
        \caption{A non-example}\label{Fig: 2}
    \end{subfigure}
    \caption{}
\end{figure}
\section{Test function calculation}
In this section, we use harmonic functions to construct a test function as in \cite{mazurowski2023yamabe}, and complete the proof of the main theorem. 

\begin{remark}
    One can also use inverse mean curvature flow to construct the test function as in Bray-Neves' work \cite{bray2004classification} to complete this proof.
\end{remark}

    When $\inf_{p\in M}\card (G\cdot p)=\infty$, the theorem obviously holds. We only need to consider the case when $\inf_{p\in M}\card (G\cdot p):=k_0<\infty$. 

    As in the previous section, we use the $G$-equivariant Green's function to blow up $M$ at the orbit with minimal cardinality. Suppose $p=\infty_{N_1}$, consider the harmonic function defined on $N_1$:
    \begin{align}\label{harmonicfunction}
     \left\{        
     \begin{array}{ccc}
            \Delta_{g_{AF}}u=0 & x\in N_1, \\
             u=1 & x\in \partial N_1, \\
             u\to 0 & \text{ at infinity}
        \end{array}
     \right.   
    \end{align}
     As in \cite{mazurowski2023yamabe}, let $w=-\log u$ and $W(t)=\int_{\{w=t\}}|\nabla w|^2da$. By the Maximal Principle, $w$ is well-defined. 
     Note $u$ and $w$ can also be similarly defined on every other end $N_2,\dots,N_{k_0}$.

   In \cite{mazurowski2023yamabe}, Mazurowski and the second author proved the following monotonicity proposition which was essentially Theorem 7.3 in Miao's work \cite{Miao_2023}.

    \begin{proposition} Assume that $(M^3,g_{_M})$ is a complete asymptotically flat manifold with non-empty, connected boundary $\Sigma$. Assume that $M$ has non-negative scalar curvature and that $H_2(M,\Sigma)=0$. Let $u$ be the harmonic function on $M$ with $u=1$ on $\Sigma$ and $u\to 0$ at infinity. Define $w = -\log(u)$ and let $\Sigma_t = \{w=t\}$. 
Then for any $t\ge 0$ one has 
\begin{equation}
\label{monotonicity-equation}
 W(t) \le   \left[e^{-t}\sqrt{W(0)} + (1-e^{-t} )\sqrt{4\pi}\right]^2.
\end{equation}
Moreover, if equality holds for some $t > 0$, then $M$ is isometric to a spatial Schwarzschild manifold (possibly with negative mass) outside some rotationally symmetric sphere. 
\end{proposition}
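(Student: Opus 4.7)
The strategy is to reduce \eqref{monotonicity-equation} to a scalar first-order ODE inequality. Setting $F(t):=\sqrt{W(t)}$, multiplying $F'(t)+F(t)\le\sqrt{4\pi}$ by the integrating factor $e^t$ and integrating on $[0,t]$ yields $e^tF(t)-F(0)\le\sqrt{4\pi}(e^t-1)$, which rearranges and squares to exactly \eqref{monotonicity-equation}. So the plan is to prove the ODE inequality $F'+F\le\sqrt{4\pi}$ for almost every $t\ge 0$. To compute $W'(t)$, I would first use $\Delta u=0$, which via $w=-\log u$ gives $\Delta w=|\nabla w|^2$. Since $w\equiv t$ on $\Sigma_t$, the tangential Laplacian $\Delta_{\Sigma_t}w$ vanishes, so the tangential--normal decomposition of $\Delta w$ reduces to the pointwise identity $\mathrm{Hess}(w)(\nu,\nu)=|\nabla w|^2-H|\nabla w|$ on $\Sigma_t$, where $\nu=\nabla w/|\nabla w|$ and $H$ is the mean curvature. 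Plugging into the standard first variation formula gives
\[
W'(t)=\int_{\Sigma_t}\bigl(2|\nabla w|^2-H|\nabla w|\bigr)\,da,
\]
so that the target ODE is equivalent to the integral inequality $\int_{\Sigma_t}H|\nabla w|\,da\ge 2\sqrt{W(t)}\bigl(2\sqrt{W(t)}-\sqrt{4\pi}\bigr)$.

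The heart of the proof is establishing this lower bound; it is a Geroch--Huisken--Ilmanen-type monotonicity adapted to harmonic level sets in place of inverse mean curvature flow. The plan is to apply the Bochner formula to $w$, integrate over the slab $\{0\le w\le t\}$, and use the Gauss equation on $\Sigma_t$ to replace $\mathrm{Ric}(\nu,\nu)$ by intrinsic data (the Gaussian curvature $K$ of $\Sigma_t$, together with $|A|^2$ and $H^2$). The hypothesis $R\ge 0$ and the Kato-type non-negativity of the traceless Hessian of $w$ let one discard all unfavorable error terms. The topological assumption $H_2(M,\Sigma)=0$ enters at the Gauss--Bonnet step: combined with the Morse theory of $w$ on $M$, it forces every regular level set $\Sigma_t$ to be a topological $2$-sphere, so that $\int_{\Sigma_t}K\,da=4\pi$, which is precisely what produces the constant $\sqrt{4\pi}$ in the ODE. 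A final Cauchy--Schwarz inequality on $\int H|\nabla w|\,da$ then produces the desired bound.

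For rigidity, equality in \eqref{monotonicity-equation} at some $t>0$ forces pointwise saturation at every step above: Cauchy--Schwarz equality makes $H$ proportional to $|\nabla w|$ on each $\Sigma_s$; equality in the Bochner step forces $\mathring{\mathrm{Hess}}(w)\equiv 0$ on the slab, so each $\Sigma_s$ is totally umbilic and $|\nabla w|$ is constant on each $\Sigma_s$; and the scalar-curvature step forces $R\equiv 0$. The resulting geometry is rotationally symmetric, and a standard ODE analysis of the warped product identifies $g_M$ on the slab with a spatial Schwarzschild metric (possibly of negative mass). The main technical obstacle I anticipate is the delicate Bochner--Gauss--Bonnet bookkeeping, together with handling interior critical points of $u$ where $|\nabla w|$ blows up; this is addressed by restricting to regular values via Sard's theorem, controlling topology jumps of $\Sigma_t$ at singular values via the hypothesis $H_2(M,\Sigma)=0$, and passing to a limit.
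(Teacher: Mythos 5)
The first thing to note is that the paper does not prove this proposition at all: it is quoted verbatim from Mazurowski--Yao, who in turn attribute it to Theorem 7.3 of Miao's work, so there is no in-paper argument to compare yours against line by line. Judged on its own terms, your reduction is correct and matches the structure of the cited proof: setting $F=\sqrt{W}$, the claimed bound is exactly the integrated form of $F'+F\le\sqrt{4\pi}$ (equivalently, monotonicity of $e^{t}\bigl(\sqrt{4\pi}-\sqrt{W(t)}\bigr)$), your first-variation identity $W'(t)=\int_{\Sigma_t}\bigl(2|\nabla w|^2-H|\nabla w|\bigr)\,da$ at regular values is right, and the target inequality $\int_{\Sigma_t}H|\nabla w|\,da\ge 4W-2\sqrt{4\pi W}$ is the correct restatement of what must be shown. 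The role you assign to $H_2(M,\Sigma)=0$ is also essentially right, though what it actually buys is connectedness of the regular level sets, whence $\int_{\Sigma_t}K\,da=2\pi\chi(\Sigma_t)\le 4\pi$; you do not need $\Sigma_t$ to be a sphere, only this one-sided Gauss--Bonnet bound.

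The genuine gap is in how you propose to prove the key lower bound on $\int_{\Sigma_t}H|\nabla w|\,da$. Integrating the Bochner identity over the slab $\{0\le w\le t\}$ produces a relation between boundary data on $\Sigma_t$ and on $\Sigma_0=\Sigma$; since the proposition assumes nothing about $\Sigma$ beyond connectedness (in particular $H$ and $|\nabla w|$ on $\Sigma_0$ are uncontrolled), this cannot yield a pointwise-in-$t$ lower bound of the required form --- note the bound must already hold at $t=0$, where the slab is empty. The argument that actually works integrates over the \emph{exterior} region $\{w\ge t\}$ and anchors the inequality at infinity, where the asymptotically flat expansion gives $W\to 4\pi$ and $\int_{\Sigma_s}H|\nabla w|\to 8\pi$, so that the desired inequality holds with equality asymptotically and is propagated backward to $\Sigma_t$. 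Moreover, a single application of Cauchy--Schwarz does not close the estimate: the Bochner/Gauss computation naturally controls $\frac{d}{ds}\int_{\Sigma_s}H|\nabla w|\,da$ (via a term $\int_{\Sigma_s}H^2|\nabla w|\,da\ge\bigl(\int_{\Sigma_s}H|\nabla w|\,da\bigr)^2/\int_{\Sigma_s}|\nabla w|\,da$, which is where Cauchy--Schwarz enters), so one obtains a coupled Riccati-type system for $W(s)$ and $\int_{\Sigma_s}H|\nabla w|\,da$ that must then be handled by an ODE comparison from $s=\infty$ down to $s=t$. Your sketch omits this second monotone quantity entirely, and with it the mechanism that makes the inequality hold for an arbitrary connected boundary. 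The rigidity discussion and the treatment of critical values via Sard are plausible in outline but inherit the same issue, since they trace equality back through a chain of estimates you have not actually set up.
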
 

Using Corollary 7.1 in Miao's work \cite{Miao_2023}, Mazurowski and the second author obtained the following estimate, which is Corollary 3.3 in \cite{mazurowski2023yamabe}.

\begin{corollary}\label{monotonicity1}
    With the same notation and assumptions as above, assume in addition that $\Sigma$ is minimal. Then 
    \[
        W(t) \le \pi\left(2-e^{-t}\right)^2
    \]
    for all $t\ge 0$. Equality holds for some $t \ge 0$ if and only if $(M,g)$ is isometric to a spatial Schwarzschild manifold with positive mass and $\Sigma$ is the horizon. 
\end{corollary}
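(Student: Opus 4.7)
The plan is a short deduction from the preceding proposition once the initial datum $W(0)$ is controlled by the minimality hypothesis. Specifically, when the boundary $\Sigma$ is a connected minimal surface (which is exactly the situation for the outermost horizons constructed in Section 2), Corollary 7.1 of Miao supplies the sharp bound $W(0)\le \pi$. This is the only new ingredient beyond the monotonicity already stated, and it is really the heart of the matter — the Bochner-plus-Gauss-Bonnet argument underlying Miao's corollary is what I would treat as a black box.

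Granting that input, the derivation of the claimed inequality is pure algebra. Starting from the monotonicity formula
\[
\sqrt{W(t)}\le e^{-t}\sqrt{W(0)}+(1-e^{-t})\sqrt{4\pi}
\]
and substituting $\sqrt{W(0)}\le \sqrt{\pi}$, one obtains
\[
\sqrt{W(t)}\le e^{-t}\sqrt{\pi}+(1-e^{-t})\cdot 2\sqrt{\pi}=\sqrt{\pi}\,(2-e^{-t}),
\]
and squaring yields $W(t)\le \pi(2-e^{-t})^2$.

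For the rigidity part, suppose equality holds at some $t_0\ge 0$. Then equality is forced simultaneously in both inequalities above. The rigidity clause of the monotonicity proposition forces $(M,g)$ to be isometric to a spatial Schwarzschild manifold (possibly of negative mass) outside some rotationally symmetric sphere. Equality in the initial bound $W(0)\le \pi$ on a minimal boundary, which I expect the cited corollary of Miao also characterizes, then excludes the negative-mass and exterior-region possibilities and identifies $\Sigma$ with the Schwarzschild horizon of positive mass. Conversely, a direct spherically symmetric calculation on Schwarzschild with $\Sigma$ the horizon verifies the equality for every $t\ge 0$, completing the characterization.
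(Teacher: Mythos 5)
Your derivation is correct and matches the route the paper takes: the paper simply quotes this as Corollary 3.3 of Mazurowski--Yao, whose proof is exactly the combination of the monotonicity formula with the bound $W(0)\le\pi$ for a minimal boundary supplied by Miao's Corollary 7.1, followed by the algebra and rigidity analysis you describe. No gap.
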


    Using \eqref{harmonicfunction}, we define the $G$-invariant harmonic function as $u_G=\int_G u(g\cdot x)d\mu(g)$
    then $u_G$ is a harmonic function satisfying:
    \begin{align}
       \left\{        
    \begin{array}{ccc}
            \Delta_{g_{AF}}u_G=0 & x\in N_i, \\
             u_G=1 & x\in \partial N_i,\\
             u_G\to 0 & x\to\infty_i,
    \end{array}
     \right.   
     \end{align}
    for all $i=1,2,\dots, \card (G\cdot p)$. 

Let $w_G(x):=-\log u_G(x)$, and $W_G(t):=\int_{\Sigma_t^G}|\nabla w_G|^2da$, where $\Sigma_t^G:=\{x\in M: w_G(x)=t\}$.

A direct corollary is as follows:
\begin{corollary}\label{Gmonotonicity}
    \begin{equation}
        W_G(t)\leq \card (G\cdot p)\pi(2-e^{-t})^2,
    \end{equation}
    equality holds if and only if each end is isometric to a half spatial Schwarzschild manifold with positive mass.
\end{corollary}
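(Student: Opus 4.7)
The plan is to reduce Corollary \ref{Gmonotonicity} to an end-by-end application of Corollary \ref{monotonicity1}, exploiting the fact that the $G$-averaging collapses trivially.

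First, I would explain that the individual Dirichlet problems on each end $N_i$ (with $u=1$ on $\partial N_i$ and $u\to 0$ at the corresponding infinity) have unique solutions $u^{(i)}$ by the maximum principle together with the asymptotic decay forced by asymptotic flatness. Since the $G$-action isometrically permutes the ends $N_1,\dots,N_{k_0}$ and also permutes the outermost minimal $2$-spheres $\partial N_i$, uniqueness forces $u^{(j)}\circ g = u^{(i)}$ whenever $g\cdot N_i = N_j$. Assembling the $u^{(i)}$'s into a single function $u$ on $\bigcup_i N_i$, this means $u(g\cdot x)=u(x)$, so the averaged function $u_G$ actually equals $u$ on each end, and hence $w_G|_{N_i}=-\log u^{(i)}$.

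Next, I would note that the level set $\Sigma_t^G$ decomposes as a disjoint union $\bigsqcup_{i=1}^{k_0}\Sigma_t^{(i)}$ with $\Sigma_t^{(i)}=\{-\log u^{(i)}=t\}\subset N_i$, so
\[
W_G(t) \;=\; \sum_{i=1}^{k_0} \int_{\Sigma_t^{(i)}}|\nabla (-\log u^{(i)})|^2\,da \;=\; \sum_{i=1}^{k_0} W^{(i)}(t),
\]
and by the $G$-equivariance above, each $W^{(i)}(t)$ equals $W^{(1)}(t)$. Now I want to apply Corollary \ref{monotonicity1} on each $(\overline{N}_i, g_{AF})$. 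The scalar curvature of $g_{AF}$ is zero, $\overline{N}_i$ is asymptotically flat with non-empty minimal boundary by construction, and crucially Lemma \ref{Lem: outermost separate} gives that $\partial\overline{N}_i$ is a \emph{single} embedded minimal $2$-sphere. Combined with property (2) of exterior regions, this forces $\overline{N}_i$ to be diffeomorphic to $\R^3$ minus a single open ball, hence homotopy equivalent to $S^2$, which gives $H_2(\overline{N}_i,\partial\overline{N}_i)=0$ by the long exact sequence of the pair.

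With these hypotheses verified, Corollary \ref{monotonicity1} yields $W^{(i)}(t)\le \pi(2-e^{-t})^2$ for each $i$, and summing over $i=1,\dots,k_0=\card(G\cdot p)$ gives the desired estimate. For the equality case, equality in the sum holds if and only if equality holds in each summand, and Corollary \ref{monotonicity1} identifies this with $(\overline{N}_i,g_{AF})$ being isometric to a positive-mass spatial Schwarzschild manifold with $\partial\overline{N}_i$ as its horizon, i.e.\ each end is a half Schwarzschild. The main conceptual step — rather than a true obstacle — is verifying that Lemma \ref{Lem: outermost separate} is what delivers the connected boundary and the topological condition $H_2=0$ needed to invoke the monotonicity in the first place; the rest is bookkeeping enabled by the $G$-symmetry.
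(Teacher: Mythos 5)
Your proposal is correct and follows exactly the route the paper intends (the paper states this as ``a direct corollary'' without written proof): reduce to an end-by-end application of Corollary~\ref{monotonicity1} on each exterior region $\overline{N}_i$, using Lemma~\ref{Lem: outermost separate} and property (2) to verify the connected minimal boundary and $H_2(\overline{N}_i,\partial\overline{N}_i)=0$ hypotheses, and then sum over the $k_0=\card(G\cdot p)$ ends. Your observation that $G$-equivariance and uniqueness of the Dirichlet problem force $u_G=u$ on each end is precisely the content implicit in the paper's assertion that $u_G$ solves the stated boundary value problem on every $N_i$.
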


Now, we have all the ingredients to complete the proof of the main theorem, the following computation mostly follows from the computation in \cite{mazurowski2023yamabe}.

\begin{proof}[Proof of the main theorem]
As in \cite{mazurowski2023yamabe}, We first consider the model case, when $M=\mathbb{RP}^3$, and $g_0$ is the standard round metric. As in \cite{bray2004classification}, $(\RP^3_p,g_{AF})$ can be identified with a spatial Schwarzschild manifold with antipodal identification on the horizon. Let $w_s=-\log(u_s)$, where we use $u_s$ to 
    denote the solution to \eqref{harmonicfunction} on $(\mathbb{RP}^3_{p},g_{AF})$. Then define
\begin{align}
    f(t)=u_0(\Sigma_t^s),
\end{align}
where $\Sigma_t^s=\{x\in (\mathbb{RP}^3_p,g_{AF}): w_s(x)=t\}$, and $u_0(x)=\green(x)^{-1}$. Let $\phi_s=f\circ w_s$. As in \cite{bray2004classification},  Obata's theorem implies that $\phi_s$ is the optimal test function in the model and

\begin{align*}
\frac{\int_{\RP^3\setminus\{p\}} 8\vert \nabla \phi_s\vert^2 \, dV}{\left(\int_{\RP^3\setminus\{p\}} \phi_s^6\, dV\right)^{1/3}} = \sigma_2. 
\end{align*}

Now, consider the blow up model $(M\setminus G\cdot p,g_{AF})$, and we construct the test function
\begin{equation}
    \phi(x)=f\circ w_G(x),
\end{equation}
where $w_G(x):=-\log u_G(x)$.

By definition, $\phi(x)$ is a $G$-invariant test function, and we can compute that
\begin{equation}
  \begin{split}
    \int_{M\setminus (G\cdot p)}|\nabla \phi_G|^2dV&=\int_{M\setminus (G\cdot p)}f'(w_G(x))^2|\nabla w_G|^2dV\\
    &=\int_0^{\infty}f'(t)^2\left(\int_{\Sigma^G_t}|\nabla w_G|da\right)dt\\
    &=C_0\int_0^{\infty}f'(t)^2e^tdt.
  \end{split}
\end{equation}
And we estimate
\begin{equation}\label{dominateresitimate}
\begin{split}
    \int_{M\setminus (G\cdot p)}\phi_G^6 dV&=\int_0^{\infty}f(t)^6\left(\int_{\Sigma_t^G}|\nabla w_G|^{-1}da\right)dt\\
      &\geq \int_0^{\infty}f(t)^6(\int_{\Sigma^G_t}|\nabla w_G|^2da)^{-2}(\int_{\Sigma^G_t}|\nabla w_G|da)^3\, dt\\
&\geq C_0^3\int_0^{\infty}f(t)^6e^{3t}W_G(t)^{-2}\, dt\\
&\geq (\card (G\cdot p))^{-2}\pi^{{-2}}C_0^3\int_0^{\infty}f(t)^6e^{3t}(2-e^{-t})^{-4}\, dt.
\end{split}
\end{equation}

Therefore, we obtain
\begin{equation}
    \frac{\int_{M\setminus (G\cdot p)}8|\nabla\phi_G|^2dV}{\left(\int_{M\setminus (G\cdot p)}\phi_G^6dV\right)^{1/3}}\leq (\card (G\cdot p))^{\frac{2}{3}}\sigma_2,
\end{equation}
which implies 
\begin{equation}
    \sigma_G(M)\leq \sigma_2\left(\inf_{p\in M}\card (G\cdot p)\right)^{\frac{2}{3}},
\end{equation}
the proof is completed.
\end{proof}
\begin{remark}
    As in \cite{mazurowski2023yamabe}, one needs to use the Lebesgue Convergence Theorem for \eqref{dominateresitimate}.
\end{remark}


\bibliographystyle{abbrv}


\bibliography{reference.bib}

\end{document}